\pgfplotsset{compat=1.8}
\newtheorem{theorem}{Theorem}[section]
\newtheorem{remark}[theorem]{Remark}
\newtheorem{example}[theorem]{Example}
\newtheorem{corollary}[theorem]{Corollary}
\newtheorem{alg}[theorem]{Algorithm}
\def\imod#1{\allowbreak\mkern10mu({\operator@font mod}\,\,#1)}
\let\c@algorithm\c@theorem  
\numberwithin{equation}{section}
\numberwithin{table}{section}
\numberwithin{figure}{section}
\newcommand{\e}{\mathrm e}
\renewcommand{\i}{\mathrm i}
\newcommand{\sinc}{\mathrm{sinc}}
\renewcommand{\b}{\boldsymbol}
\newcommand{\R}{\mathbb R}
\newcommand{\C}{\mathbb C}
\newcommand{\Z}{\mathbb Z}
\newcommand{\N}{\mathbb N}
\newcommand{\T}{\mathbb T}
\newcommand{\I}{\mathcal I}
\newcommand{\ex}{\hspace*{0ex} \hfill \hbox{\vrule height
		1.5ex\vbox{\hrule width 1.4ex \vskip 1.4ex\hrule  width 1.4ex}\vrule
		height 1.5ex}}
\long\def\symbolfootnote[#1]#2{\begingroup%
	\def\thefootnote{\fnsymbol{footnote}}\footnote[#1]{#2}\endgroup}
\title{Optimal density compensation factors for the reconstruction of the Fourier transform of bandlimited functions}
\date{}
\author{Melanie Kircheis\footnotemark[1] \and Daniel Potts\footnotemark[3]}
\begin{document}
	
\maketitle		

\begin{abstract}
	An inverse nonequispaced fast Fourier transform (iNFFT) is a fast algorithm to compute the Fourier coefficients of a trigonometric polynomial from nonequispaced sampling data.
	However, various applications such as magnetic resonance imaging (MRI) are concerned with the analogous problem for bandlimited functions, i.\,e., the reconstruction of point evaluations of the Fourier transform from given measurements of the bandlimited function.
	In this paper, we review an approach yielding exact reconstruction for trigonometric polynomials up to a certain degree, and extend this technique to the setting of bandlimited functions.
	Here we especially focus on methods computing a diagonal matrix of weights needed for sampling density compensation.
	
	\medskip
	\noindent\emph{Key words}:
	inverse nonequispaced fast Fourier transform, nonuniform fast Fourier transform, direct inversion, density compensation, bandlimited functions, iNFFT, NFFT, NUFFT
	\smallskip
	
	\noindent AMS \emph{Subject Classifications}: \text{
		65Txx, 
		65T50, 
		65F05. 
	}
\end{abstract}

\footnotetext[1]{Corresponding author: melanie.kircheis@math.tu-chemnitz.de, Chemnitz University of
	Technology, Faculty of Mathematics, D--09107 Chemnitz, Germany}
\footnotetext[3]{potts@math.tu-chemnitz.de, Chemnitz University of
	Technology, Faculty of Mathematics, D--09107 Chemnitz, Germany}


\section{Introduction}

The nonequispaced fast Fourier transform (NFFT) is a fast algorithm to evaluate a trigonometric polynomial
\begin{align}
\label{eq:trig_poly_2d}
f(\b x) = \sum_{\b k \in \I_{\b M}} \hat{f}_{\b k}\, \e^{2\pi\i \b k \b x}
\end{align}
with given Fourier coefficients \mbox{$\hat f_{\b k}\in\C$}, \mbox{$\b k\in\I_{\b{M}}$},
at non\-equi\-spaced points \mbox{$\b x_j$}, \mbox{$j=1,\dots,N$}, with \mbox{$N\in\N$}, 
where for \mbox{$\b M \coloneqq (M,\dots,M)^T$}, \mbox{$M\in 2\N$}, we define the multi-index set
\mbox{$\I_{\b M} \coloneqq \Z^d \cap \left[-\tfrac{M}{2},\tfrac{M}{2}\right)^d$} 
with cardinality \mbox{$|\I_{\b M}| = M^d$}.
For more information see \cite[pp.~377-381]{PlPoStTa18} and references therein.
In case of equispaced points $\b x_j$ and $|\I_{\b{M}}|=N$, this evaluation can be realized by means of the well-known fast Fourier transform (FFT); an algorithm that is invertible.
However, various applications need to perform an inverse nonequispaced fast
Fourier transform (iNFFT), i.\,e., compute the Fourier coefficients~$\hat f_{\b k}$ from given function evaluations~$f(\b x_j)$ of the trigonometric polynomial~\eqref{eq:trig_poly_2d}.
Hence, we are interested in an inversion also for nonequispaced data.
In contrast to iterated procedures, where multiple applications of the NFFT are needed to compute a solution, we consider so-called direct methods,
where for a fixed set of points $\b x_j$, \mbox{$j=1,\dots, N$}, the reconstruction can be realized with the same number of arithmetic operations as a single application of an adjoint NFFT.
To achieve this, a certain precomputational step is compulsory, since the adjoint NFFT does not yield an inversion of the NFFT by itself, see  \cite[Sec.~3]{KiPo23}. 
Although this precomputations might be rather costly, they need to be done only once for a given set of points 
$\b x_j$, \mbox{$j=1,\dots,N$},
while the actual reconstruction step is very efficient.
Thus, direct methods are especially beneficial in case of multiple applications with fixed points.

For the setting of trigonometric polynomials several approaches are known,
while in this paper we focus on the analogous problem for bandlimited functions with
\begin{align}
\label{eq:forward_integral}
f(\b x) 
= 
\int\limits_{\left[-\frac M2,\frac M2\right)^d} \hat f(\b v)\,\e^{2\pi\i \b v\b x}\,\mathrm d\b v,
\quad \b x\in \R^d .
\end{align}
In various applications such as magnetic resonance imaging (MRI), cf.~\cite{EgKiPo22}, the aim is the reconstruction of point evaluations \mbox{$\hat f(\b k)\in\C$}, \mbox{$\b k\in\I_{\b{M}}$}, of an object $\hat f$ from given measurements \mbox{$f(\b x_j)$}, \mbox{$j=1,\dots,N$}, of the form \eqref{eq:forward_integral}.
To this end, known approaches for the direct inversion of the NFFT shall be extended to this setting.
In particular, we investigate methods computing a diagonal matrix of weights \mbox{$w_j\in\C$}, \mbox{$j=1,\dots,N$}, 
which are needed for sampling density compensation due to the nonequispaced sampling~\mbox{$\b x_j$}, \mbox{$j=1,\dots,N$}.

Therefore, this paper is organized as follows. 
Firstly, in Section~\ref{sec:trig_poly} we review the approach in \cite[Sec.~3.1]{KiPo23}, that leads to an exact reconstruction for all trigonometric polynomials \eqref{eq:trig_poly_2d} of degree~$\b M$.
Subsequently, in Section~\ref{sec:bandlim} we aim for an analogous approach for bandlimited functions \mbox{$f\in L_1(\R^d)\cap C_0(\R^d)$} as well.
Finally, in Section~\ref{sec:numerics} we show some numerical examples to investigate the accuracy of our approaches.

\section{Trigonometric polynomials \label{sec:trig_poly}}

Let \mbox{$\T^d \coloneqq \R^d\setminus\Z^d$} denote the $d$-dimensional torus with \mbox{$d\in\N$}.
The inner product of two vectors shall be defined as usual as
\mbox{$\b k \b x \coloneqq k_1 x_1 + \dots + k_d x_d$}.
For the reconstruction of the Fourier coefficients \mbox{$\hat f_{\b k}\in\C$}, \mbox{$\b k\in\I_{\b{M}}$},
from function evaluations \mbox{$f(\b x_j)$}, \mbox{$j=1,\dots,N$},
of trigonometric polynomials \mbox{$f \in L_2(\T^d)$} of degree $\b M$, see \eqref{eq:trig_poly_2d},
the following result is known, cf.~\cite[Cor.~3.4]{KiPo23}.
\begin{theorem}
	\label{Thm:exact_trig_poly}
	Let \mbox{$|\I_{\b{2M}}| \leq N$} and \mbox{$\b x_j\in\T^d$}, \mbox{$j=1,\dots,N$}, be given.
	Then the density compensation factors \mbox{$w_j\in\C$} satisfying
	\begin{align}
	\label{eq:claim_quadrature_coefficients_double}
	\sum_{j=1}^N w_j \,\e^{2\pi\i\b k\b x_j} = \delta_{\b 0,\b k},
	\quad\b k\in{\I_{\b {2M}}},
	\end{align}
	with the Kronecker delta \mbox{$\delta_{\b \ell,\b k}$}, are optimal,
	since for all trigonometric polynomials of degree~$\b M$, see \eqref{eq:trig_poly_2d},
	an exact reconstruction of the Fourier coefficients \mbox{$\hat f_{\b k}$} is given by
	\begin{align}
	\label{eq:Fourier_coeffs_nonequi}
	\hat f_{\b k}
	=
	h_{\b k}^{\mathrm{w}}
	\coloneqq 
	\sum_{j=1}^{N} 
	w_j\, f(\b x_j)\,\e^{-2\pi\i \b k \b x_j},
	\quad \b k\in\I_{\b M}.
	\end{align}
\end{theorem}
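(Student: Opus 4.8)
The plan is to substitute the integral representation—wait, no: here $f$ is a trigonometric polynomial of degree $\b M$, so I start from \eqref{eq:trig_poly_2d} and simply plug it into the candidate reconstruction formula \eqref{eq:Fourier_coeffs_nonequi}. Writing $h_{\b k}^{\mathrm w} = \sum_{j=1}^N w_j f(\b x_j)\,\e^{-2\pi\i\b k\b x_j}$ and inserting $f(\b x_j) = \sum_{\b\ell\in\I_{\b M}} \hat f_{\b\ell}\,\e^{2\pi\i\b\ell\b x_j}$, I would interchange the two finite sums to obtain
\begin{align*}
h_{\b k}^{\mathrm w}
= \sum_{\b\ell\in\I_{\b M}} \hat f_{\b\ell} \sum_{j=1}^N w_j\,\e^{2\pi\i(\b\ell-\b k)\b x_j}.
\end{align*}
The inner sum is exactly the left-hand side of \eqref{eq:claim_quadrature_coefficients_double} evaluated at the index $\b\ell - \b k$.

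The key observation is a range check: for $\b k,\b\ell\in\I_{\b M} = \Z^d\cap[-\tfrac M2,\tfrac M2)^d$, the difference $\b\ell-\b k$ lies in $\Z^d\cap(-M,M)^d \subseteq \I_{\b{2M}} = \Z^d\cap[-M,M)^d$, so the hypothesis \eqref{eq:claim_quadrature_coefficients_double} applies and gives $\sum_{j=1}^N w_j\,\e^{2\pi\i(\b\ell-\b k)\b x_j} = \delta_{\b 0,\b\ell-\b k} = \delta_{\b k,\b\ell}$. Hence $h_{\b k}^{\mathrm w} = \sum_{\b\ell\in\I_{\b M}} \hat f_{\b\ell}\,\delta_{\b k,\b\ell} = \hat f_{\b k}$ for every $\b k\in\I_{\b M}$, which is the claimed exact reconstruction.

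It remains to justify that such $w_j$ exist and to address the word \emph{optimal}. Existence follows because \eqref{eq:claim_quadrature_coefficients_double} is a linear system with $|\I_{\b{2M}}|$ equations in $N\geq|\I_{\b{2M}}|$ unknowns whose coefficient matrix $\big(\e^{2\pi\i\b k\b x_j}\big)_{\b k\in\I_{\b{2M}},\,j=1,\dots,N}$ is a (wide) nonequispaced Fourier matrix; under the standing assumption that it has full row rank the system is solvable (this is presumably recorded in \cite{KiPo23}, and I would cite it rather than reprove it). For optimality, the point is that \eqref{eq:claim_quadrature_coefficients_double} is precisely the necessary and sufficient condition on a diagonal weighting $\operatorname{diag}(w_j)$ for the adjoint-NFFT-type formula \eqref{eq:Fourier_coeffs_nonequi} to invert the forward map on the whole space of degree-$\b M$ trigonometric polynomials: necessity is seen by running the computation above backwards on the monomials $f=\e^{2\pi\i\b\ell\b x}$, which forces the inner sums to be Kronecker deltas for all $\b\ell - \b k$ with $\b k,\b\ell\in\I_{\b M}$, i.e. exactly on $\I_{\b{2M}}$.

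The main obstacle is essentially bookkeeping rather than analysis: being careful that the index difference set $\{\b\ell-\b k:\b k,\b\ell\in\I_{\b M}\}$ is contained in $\I_{\b{2M}}$ (the half-open conventions matter only at the boundary, and $(-M,M)^d\subseteq[-M,M)^d$ handles it), and cleanly separating the two assertions—\emph{this formula reconstructs exactly} (the direct computation above) and \emph{no other diagonal weighting does better} (the necessity argument on Fourier monomials). The interchange of finite sums and the solvability of the linear system are routine.
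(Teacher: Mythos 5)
Your proposal is correct and follows essentially the same route as the paper: both reduce the claim to verifying the quadrature condition on the exponential monomials $\e^{2\pi\i\b\ell\b x}$, $\b\ell\in\I_{\b M}$, and both rest on the same index bookkeeping $\b\ell-\b k\in\I_{\b{2M}}$ for $\b k,\b\ell\in\I_{\b M}$ (the paper invokes linearity via the orthonormal basis of $L_2(\T^d)$ where you interchange the two finite sums, which is the same argument). Your added remarks on existence (solvability of the underdetermined system for $|\I_{\b{2M}}|\leq N$) and on necessity of the condition also match what the paper records.
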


\begin{proof}
	It is known that \mbox{$\{\e^{2\pi\i \b\ell \b x} \colon \b\ell\in\Z^d\}$} forms an orthonormal basis
	of the Hilbert space~$L_2(\T^d)$ of all $1$-periodic, complex-valued functions, see~\cite[p.~161]{PlPoStTa18}.
	To achieve an exact reconstruction for all trigonometric polynomials~\eqref{eq:trig_poly_2d}
	with maximum degree~$\b M$ it suffices to consider the set of basis functions with \mbox{$\b\ell\in\I_{\b M}$}.
	For each of these basis functions with fixed \mbox{$\b\ell\in\I_{\b M}$} we have
	\begin{align*}
	h_{\b k}^{\mathrm{w}}
	= 
	\sum_{j=1}^{N} 
	w_j\,\e^{2\pi\i (\b\ell-\b k) \b x_j},
	\quad \b k\in\I_{\b M},
	\end{align*}
	and
	\begin{align*}
	\hat f_{\b k}
	&= 
	c_{\b k}(f)
	=
	\int\limits_{\T^d} \e^{2\pi\i (\b\ell-\b k) \b x}\,\mathrm d{\b x}
	=
	\delta_{\b\ell,\b k},
	\quad \b k\in\I_{\b M},
	\end{align*}
	where \mbox{$\delta_{\b \ell,\b k}$} denotes the Kronecker delta.
	Thus, to obtain \eqref{eq:Fourier_coeffs_nonequi} we need to assure that the weights satisfy
	\begin{align}
	\label{eq:claim_FK}
	\sum_{j=1}^{N} 
	w_j\,\e^{2\pi\i (\b\ell-\b k) \b x_j}
	=
	\delta_{\b\ell,\b k},
	\quad \b\ell,\b k\in\I_{\b M}.
	\end{align}
	Since for \mbox{$\b k,\b\ell\in\I_{\b M}$} we have \mbox{$(\b\ell-\b k) \in\I_{\b{2M}}$}, the property \eqref{eq:claim_FK} is fulfilled by \eqref{eq:claim_quadrature_coefficients_double}.
	In other words, by defining the nonequispaced Fourier matrix
	\begin{align}
	\label{eq:matrix_A}
	\b A = \b A_{|\I_{\b{M}}|} \coloneqq \left( \e^{2\pi\i \b k \b x_j} \right)_{j=1,\,\b k \in \I_{\b M}}^{N} 
	\ \in \mathbb C ^{N\times |\I_{\b M}|}
	\end{align}
	as well as \mbox{$\b w \coloneqq (w_j)_{j=1}^N$}, an exact solution to the linear system of equations
	\begin{align}
	\label{eq:exactness_cond_trig_poly}
	\b A_{|\I_{\b {2M}}|}^T \,\b w = \b e_{\b 0} \coloneqq \left( \delta_{\b 0,\b k} \right)_{\b k\in{\I_{\b{2M}}}}
	\end{align}
	yields an exact reconstruction for all trigonometric polynomials \eqref{eq:trig_poly_2d} of degree $\b M$.
	As already mentioned in \cite[Sec. 3.1]{Hu09} an exact solution to \eqref{eq:exactness_cond_trig_poly} can only be found
	if \eqref{eq:exactness_cond_trig_poly} is an underdetermined system with \mbox{$|\I_{\b{2M}}| \leq N$}. 
\end{proof}

Thus, in case \mbox{$|\I_{\b{2M}}| \leq N$} and the condition number of \mbox{$\b A_{|\I_{\b {2M}}|}$} is sufficiently small, the optimal density compensation factors \mbox{$\b w\in\C^N$} can be found as solution to
the linear system of equations \eqref{eq:exactness_cond_trig_poly}.
As mentioned in \cite[Sec.~3.1.1]{KiPo23} the unique solution is then given by the normal equations of second kind 
\begin{align}
\label{eq:normal_equations_second_kind_double}
\b A_{|\I_{\b {2M}}|}^T \overline{\b A_{|\I_{\b {2M}}|}} \,\b v = \b e_{\b 0},
\quad 
\overline{\b A_{|\I_{\b {2M}}|}} \,\b v = \b w,
\end{align}
and can efficiently be computed by an iteration procedure combining the CG algorithm and the NFFT.

However, when \mbox{$|\I_{\b{2M}}| > N$} there is no theoretical guaranty.
In addition, the numerics in \cite{KiPo23} have shown that the least squares solution satisfying the normal equations of first kind
\begin{align}
\label{eq:normal_equations_first_kind_double}
\overline{\b A_{|\I_{\b {2M}}|}} \,\b A_{|\I_{\b {2M}}|}^T \,\b w = \overline{\b A_{|\I_{\b {2M}}|}} \,\b e_{\b 0} ,
\end{align}
is not a good approximation.
This is why, we recommend another computation scheme in this setting.
To this end, note that \eqref{eq:exactness_cond_trig_poly} implies by \eqref{eq:claim_FK} that
\begin{align}
\label{eq:lgs_wcf}
\b A^* \b W \b A = \b I_{|\I_{\b M}|} 
\end{align}
with the weight matrix \mbox{$\b W \coloneqq \mathrm{diag} (w_j)_{j=1}^N$}.
Hence, in case a solution to \eqref{eq:exactness_cond_trig_poly} cannot be found using \eqref{eq:normal_equations_second_kind_double}, 
one may try to find suitable weights utilizing \eqref{eq:lgs_wcf} by minimizing the Frobenius norm 
\mbox{$\big\| \b A^* \b W \b A - \b I_{|\I_{\b M}|} \big\|_{\mathrm F}^2$}.
As shown in \cite[Sec.~3.4.2]{KiPo23} this minimizer can be obtained by solving \mbox{$\b S \b w = \b b$}, 
where
\begin{align}
\label{eq:wcf_system_matrix}
\b S 
&\coloneqq
\Big( \left| \left[ \b A \b A^* \right]_{j,s} \right|^2 \Big)_{j,s=1}^N
\quad\text{ and }\quad
\b b 
= 
|\I_{\b M}| \cdot \b 1_N .
\end{align}
Note that by means of the definition \eqref{eq:matrix_A} as well as 
\mbox{$\b f \coloneqq \left(f(\b x_j)\right)_{j=1}^N$} and \mbox{$\b{\hat f} \coloneqq (\hat f_{\b k})_{\b k\in\I_{\b{M}}}$}, 
the reconstruction \eqref{eq:Fourier_coeffs_nonequi} can be denoted as \mbox{$\b{\hat f} = \b A^* \b W \b f$}.
For further details see \cite{KiPo23}.

\section{Bandlimited functions \label{sec:bandlim}}

Since various applications 
are concerned with bandlimited functions instead of trigonometric polynomials,
we now aim to extend the method from Section~\ref{sec:trig_poly} to bandlimited functions \mbox{$f\in L_1(\R^d)\cap C_0(\R^d)$} with maximum bandwidth $\b M$, i.\,e., functions whose (continuous) Fourier transform
\begin{align}
\label{eq:inverse_integral}
\hat f(\b v)
\coloneqq 
\int\limits_{\R^d} 
f(\b x)\,\e^{-2\pi\i \b v\b x}\,\mathrm d\b x,
\quad \b v\in\R^d,
\end{align}
is supported on \mbox{$\left[-\frac M2,\frac M2\right)^d$}.
Thus, their inverse Fourier transform 
\begin{align*}
f(\b x) 
= 
\int\limits_{\R^d} \hat f(\b v)\,\e^{2\pi\i \b v\b x}\,\mathrm d\b v,
\quad \b x\in \R^d,
\end{align*}
can be written as in~\eqref{eq:forward_integral}.

\subsection{Reconsideration as trigonometric polynomials}

To find a suitable reconstruction technique for a bandlimited function $f$, we consider its 1-periodized version
\begin{align*}
\tilde f(\b x) 
\coloneqq
\sum_{\b r\in\Z^d} f(\b x+\b r) .
\end{align*}
Note that \mbox{$\tilde f\in L_2(\T^d)$} is uniquely representable in form of its absolute convergent Fourier series
\begin{align}
\label{eq:periodization_f}
\tilde f(\b x) 
\coloneqq
\sum_{\b k\in\Z^d} c_{\b k}(\tilde f) \,\e^{2\pi\i\b k\b x} ,
\end{align}
see~\cite[Thm.~4.5]{PlPoStTa18}, where the Fourier coefficients are given by
\begin{align}
\label{eq:fourier_coeffs_ftilde}
c_{\b k}(\tilde f) 
&= 
\int\limits_{\T^d} \tilde f(\b x)\,\e^{-2\pi\i \b k\b x}\,\mathrm d\b x 
= 
\int\limits_{\R^d} f(\b x)\,\e^{-2\pi\i \b k\b x}\,\mathrm d\b x
=
\hat f(\b k),
\quad \b k\in\Z^d,
\end{align}
cf.~\eqref{eq:inverse_integral}.
Moreover, it is known that $f$ is a bandlimited function with bandwidth $\b M$, i.\,e., we have \mbox{$\hat f(\b k) = 0$}, \mbox{$\b k\in\Z^d\setminus\I_{\b{M}}$}.
Therefore, the periodic function \eqref{eq:periodization_f} in fact is a trigonometric polynomial of degree $\b M$ as in \eqref{eq:trig_poly_2d}, which makes it reasonable to utilize the result from Section~\ref{sec:trig_poly} to reconstruct the Fourier coefficients \eqref{eq:fourier_coeffs_ftilde}.
Namely, an exact solution to the linear system \eqref{eq:exactness_cond_trig_poly} yields an exact reconstruction 
\begin{align}
\label{eq:reconstr_exact_bandlim}
\hat f(\b k)
=
c_{\b k}(\tilde f)
=
\sum_{j=1}^{N} 
w_j\, \tilde f(\b x_j)\,\e^{-2\pi\i \b k \b x_j},
\quad \b k\in\I_{\b{M}}.
\end{align}
Note that using the nonequispaced Fourier matrix \mbox{$\b A\in\C^{N\times |\I_{\b{M}}|}$} in \eqref{eq:matrix_A}, the weight matrix \mbox{$\b W = \mathrm{diag} (w_j)_{j=1}^N$} as well as the vectors
\mbox{$\b{\tilde f} \coloneqq (\tilde f(\b x_j))_{j=1}^N$} and \mbox{$\b{\hat f} \coloneqq (\hat f(\b k))_{\b k\in\I_{\b{M}}}$}, 
the reconstruction \eqref{eq:reconstr_exact_bandlim} can be denoted as \mbox{$\b{\hat f} = \b A^* \b W \b{\tilde f}$}.
In addition, by \eqref{eq:periodization_f} we have \mbox{$\b{\tilde f} = \b A \b{\hat f}$}, such that \eqref{eq:lgs_wcf} is fulfilled.

However, in practical applications, such as MRI, this is only a hypothetical case, since the periodization $\tilde f$ cannot be sampled.
Due to a limited coverage of space by the acquisition, the function $f$ is typically only on a bounded domain, w.l.o.g. for \mbox{$\b x\in \left[ -\tfrac 12, \tfrac 12 \right)^d$}, cf.~\cite{EgKiPo22}.
Thus, we need to assume that $f$ is small outside the interval~\mbox{$\left[ -\tfrac 12, \tfrac 12 \right)^d$}, such that \mbox{$\tilde f(\b x_j) \approx f(\b x_j)$},
and we have to deal with the approximation
\begin{align}
\label{eq:approx_fk_nonequi}
\hat f(\b k)
\approx 
\sum_{j=1}^{N} 
w_j\, f(\b x_j)\,\e^{-2\pi\i \b k \b x_j},
\quad \b k\in\I_{\b{M}}.
\end{align}
This is to say, by \eqref{eq:reconstr_exact_bandlim} the error in the approximation \eqref{eq:approx_fk_nonequi} solely occurs because $f$ is not known on whole $\R^d$.
Note that the reconstruction \eqref{eq:approx_fk_nonequi} can also be denoted as \mbox{$\b{\hat f} \approx \b A^* \b W \b f$} with \mbox{$\b W = \mathrm{diag} (w_j)_{j=1}^N$} from \eqref{eq:exactness_cond_trig_poly}.

\begin{remark}
	As known by \cite[Ex.~1.22]{LuBo}, the periodization of the function 
	\mbox{$g(\b t) = \e^{2\pi\i \b t \b x}$}, \mbox{$\b t\in \left[-\frac M2,\frac M2\right)^d$}, 
	with \mbox{$\b x\in\C^d$} fixed, possesses the absolutely and uniformly convergent Fourier series
	\begin{align}
	\label{eq:fourier_series_exp}
	\e^{2\pi\i\b t \b x}
	=
	\sum_{\b\ell\in\Z^d} \e^{2\pi\i\b t \b y_{\b\ell}} \,\sinc\big(M\pi(\b x - \b y_{\b\ell})\big),
	\quad \b x\in\C^d ,
	\end{align}
	with the $d$-variate \mbox{$\sinc$} function \mbox{$\sinc(\b x) \coloneqq \prod_{t=1}^d \mathrm{sinc}(x_t)$}
	with
	\begin{align*}
	\mathrm{sinc}(x) \coloneqq \left\{ \begin{array}{ll}  \frac{\sin x}{x} & \quad x \in \mathbb R \setminus \{0\}\,, \\ [1ex]
	1 & \quad x = 0\,, \end{array} \right.
	\end{align*}
	and equispaced points
	\mbox{$\b y_{\b\ell} \coloneqq \b M^{-1} \odot {\b \ell} = \left( M^{-1} \ell_1, \dots, M^{-1} \ell_d \right)^T$}, 
	\mbox{$\b\ell\in \Z^d$}.
	Since the Fourier coefficients \mbox{$\sinc\big(M\pi(\b x - \b y_{\b\ell})\big)$} in \eqref{eq:fourier_series_exp} are $\ell_2$-summable, we may introduce the $\sinc$ operator
	\begin{align}
	\label{eq:operator_sinc}
	\mathcal C 
	\coloneqq 
	\bigg( \sinc\big(M\pi(\b x_j - \b y_{\b\ell})\big) \bigg)_{j=1,\, \b\ell \in \Z^d}^{N} .
	\end{align}
	By additionally defining the one-sided infinite Fourier matrix 
	\begin{align*}
	\mathcal F 
	\coloneqq 
	\left( \e^{2\pi\i \b k \b y_{\b\ell}} \right)_{\b\ell \in \Z^d,\, \b k \in \I_{\b M}} ,
	\end{align*}
	the matrix product $\mathcal C \mathcal F$ can be written as
	\begin{align}
	\label{eq:operator_CF}
	\mathcal C \mathcal F
	=
	\Bigg(
	\sum_{\b\ell\in\Z^d} \e^{2\pi\i \b k \b y_{\b\ell}} \,\sinc\big(M\pi(\b x_j - \b y_{\b\ell})\big)
	\Bigg)_{j=1,\, \b k \in \I_{\b M}}^{N} 
	=
	\b A
	\end{align}
	with the nonequispaced Fourier matrix \mbox{$\b A\in\C^{N\times |\I_{\b{M}}|}$} in~\eqref{eq:matrix_A},
	since the components of~\eqref{eq:operator_CF} coincide with point evaluations of \eqref{eq:fourier_series_exp}
	at \mbox{$\b x = \b x_j$}, \mbox{$j=1,\dots,N$}, and \mbox{$\b t = \b k\in\I_{\b{M}}$}.
	\ex
\end{remark}

\subsection{Connection to previous work}

Analogous to \cite[Thm.~3.8]{KiPo23} we aim to extend the approximation \eqref{eq:reconstr_exact_bandlim} onto the whole interval, i.\,e., we consider
\begin{align*}
\hat f(\b v)
\approx
\tilde h(\b v) 
\coloneqq 
\sum_{j=1}^{N} 
w_j\, \tilde f(\b x_j)\,\e^{-2\pi\i \b v \b x_j},
\quad \b v\in\left[-\tfrac M2,\tfrac M2\right)^d.
\end{align*}
Inserting this into the inverse Fourier transform~\eqref{eq:forward_integral} yields
\begin{align*}
f(\b x) 
&= 
\int\limits_{\left[-\frac M2,\frac M2\right)^d} \hat f(\b v)\,\e^{2\pi\i \b v\b x}\,\mathrm d\b v
\approx
\int\limits_{\left[-\frac M2,\frac M2\right)^d} 
\tilde h(\b v) \,\e^{2\pi\i \b v\b x}\,\mathrm d\b v \notag \\
&=
\sum_{j=1}^{N} w_j\, \tilde f(\b x_j)
\int\limits_{\left[-\frac M2,\frac M2\right)^d} \e^{-2\pi\i \b v (\b x_j-\b x)} \,\mathrm d\b v \notag \\
&=
\sum_{j=1}^{N} w_j\, \tilde f(\b x_j) \cdot |\I_{\b M}| \, \sinc\big(M\pi(\b x_j-\b x)\big),
\quad \b x \in \R^d.
\end{align*}
Especially, by evaluation at \mbox{$\b x = \b x_s$}, \mbox{$s=1,\dots,N$}, we obtain
\begin{align*}
f(\b x_s) 
&\approx
\sum_{j=1}^{N} w_j\, \tilde f(\b x_j) \cdot |\I_{\b M}| \, \sinc\big(M\pi(\b x_j-\b x_s)\big).
\end{align*}
Therefore, one could also aim to choose suitable weights \mbox{$w_j$} based on this equation.
More precisely, by defining the nonequispaced $\sinc$ matrix 
\begin{align}
\label{eq:matrix_C_nonequi}
\b C_{\mathrm{n}}
\coloneqq 
\bigg( \sinc\big(M\pi(\b x_j - \b x_s)\big) \bigg)_{j,s=1}^{N} 
\ \in \mathbb R^{N\times N} ,
\end{align}
as well as 
\mbox{$\b f = \left(f(\b x_j)\right)_{j=1}^N$} and \mbox{$\b{\tilde f} = (\tilde f(\b x_j))_{j=1}^N$},
one may try to find a weight matrix \mbox{$\b W = \mathrm{diag} (w_j)_{j=1}^N$} such that the approximation
\mbox{$\b f \approx |\I_{\b M}| \cdot \b C_{\mathrm{n}} \b W \b{\tilde f}$}
is best as possible.
Note that still \mbox{$\b{\tilde f}$} is unknown, such that we have to deal with the overall approximation 
\mbox{$\b f \approx |\I_{\b M}| \cdot \b C_{\mathrm{n}} \b W \b{f}$},
i.\,e., one would ideally aim for 
\mbox{$\b I_N = |\I_{\b M}| \cdot \b C_{\mathrm{n}} \b W$}.
We remark that on the main diagonal \mbox{$|\I_{\b M}| \cdot \b C_{\mathrm{n}} \b W = \b I_N$} reads as
\begin{align*}
\tfrac{1}{|\I_{\b M}|} 
= w_j \,\sinc\big(M\pi(\b x_j-\b x_j)\big) %
= w_j \,\sinc(0) = w_j, \quad j=1,\dots,N.
\end{align*}
However, for all other entries with \mbox{$j\neq s$} we would need 
\mbox{$\sinc(M\pi(\b x_j-\b x_s)) = 0$}, which is only true for $\b x_j$ equispaced.
In other words, for arbitrary points $\b x_j$ equality in
\mbox{$|\I_{\b M}| \cdot \b C_{\mathrm{n}} \b W = \b I_N$} 
cannot be fulfilled for any weights.

Hence, one can only look for an approximate solution, e.\,g. by considering the least squares problem
\begin{align*}
\underset{\b W = \mathrm{diag}(w_j)_{j=1}^N}{\text{Minimize }} \ 
\||\I_{\b M}| \cdot \b C_{\mathrm{n}} \b W -\b I_N\|_{\mathrm F}^2.
\end{align*}
When denoting the $j$-th column of $\b W$, $\b I_N$ and $\b C_{\mathrm n}$ 
as $\b w_j$, $\b e_j$ and $[\b C_{\mathrm n}]_j$, respectively,  
we may rewrite the Frobenius norm by only considering the nonzero entries via
\begin{align*}
\||\I_{\b M}| \cdot \b C_{\mathrm n} \b W-\b I_N\|_{\mathrm F}^2
&=
\sum_{j=1}^N \||\I_{\b M}| \cdot \b C_{\mathrm n} \b w_j-\b e_j\|_2^2 
=
\sum_{j=1}^N \left\||\I_{\b M}| \cdot [\b C_{\mathrm n}]_j \, {w_j}-\b e_j\right\|_2^2.
\end{align*}
Thus, as stated in \cite{GrLeIn06} (without proof) the least squares solution to the minimization problem is given by
\begin{align}
\label{eq:sol_greengard}
{w_j}
&=
\frac{1}{|\I_{\b M}|} \, [\b C_{\mathrm n}]_j^\dagger \, \b e_j
=
\frac{1}{|\I_{\b M}|} \left( [\b C_{\mathrm n}]_j^* \, [\b C_{\mathrm n}]_j \right)^{-1} [\b C_{\mathrm n}]_j^* \, \b e_j \notag \\
&=
\frac{\sinc\big(M\pi (\b x_j-\b x_j)\big)}
{|\I_{\b M}|\, \sum_{s=1}^N \,\sinc^2(M\pi(\b x_j-\b x_s))} 
=
\frac{1}{|\I_{\b M}|}
\left( \sum_{s=1}^N \sinc^2(M\pi(\b x_j-\b x_s)) \right)^{-1}. %
\end{align}

\begin{remark}
	Note that in \cite{GrLeIn06} it was claimed that this approach coincides with the one in \cite{PiMe99} only considering finite sections of \eqref{eq:operator_sinc} with \mbox{$\b\ell\in\I_{\b{M}}$}.
	However, we remark that this claim only holds asymptotically for \mbox{$|\I_{\b M}|\to\infty$}. 
	
	By the classical sampling theorem of Shannon-Whittaker-Kotelnikov, see \cite{Whittaker, Shannon49, Kotelnikov},
	any bandlimited function \mbox{$f\in L_2(\R^d)$} with maximum bandwidth~$\b M$ can be recovered from its uniform samples \mbox{$f(\b y_{\b\ell})$}, \mbox{$\b\ell\in\Z^d$}, and we have
	\begin{align}
	\label{eq:shannon}
	f(\b x) 
	= 
	\sum_{\b\ell\in\Z^d} f(\b y_{\b\ell}) 
	\,\sinc\big(M\pi(\b x - \b y_{\b\ell})\big) ,
	\quad \b x\in\R^d .
	\end{align}
	Now we apply the sampling theorem of Shannon-Whittaker-Kotelnikov to the shifted $\sinc$ function
	\mbox{$f(\b x)=\sinc(M\pi(\b x_j-\b x)) \in L_2(\R^d)\cap C(\R^d)$} with $j$ fixed.
	By evaluation at \mbox{$\b x = \b x_s$}, \mbox{$s=1,\dots,N$}, we obtain
	\begin{align}
	\label{eq:shannon_sinc}
	\sinc\big(&M\pi(\b x_j-\b x_s)\big) 
	= 
	\sum_{\b\ell\in\Z^d} \sinc\big(M\pi(\b x_j-\b y_{\b\ell})\big) 
	\,\sinc\big(M\pi(\b x_s - \b y_{\b\ell})\big) .
	\end{align}
	Using the $\sinc$ operator $\mathcal C$ in \eqref{eq:operator_sinc} and the nonequispaced $\sinc$ matrix in \eqref{eq:matrix_C_nonequi}, this can be written as
	\begin{align}
	\label{eq:matrix_Cn_split}
	\b C_{\mathrm{n}} = \mathcal C \mathcal C^* \in \R^{N\times N} .
	\end{align}
	Hence, a restriction to finitely many $\b \ell$ in \eqref{eq:shannon_sinc} corresponds to uniform truncation of a Shannon series \eqref{eq:shannon}, which is known as a poor approximation due to the slow convergence of the $\sinc$ function.
	Thus, equality in \eqref{eq:matrix_Cn_split} is only satisfied for the operator $\mathcal C$ in \eqref{eq:operator_sinc}, while considering finite sections of \eqref{eq:operator_sinc} with \mbox{$\b\ell\in\I_{\b{M}}$} implies a poor approximation of \mbox{$\b C_{\mathrm{n}}$}.
	\ex
\end{remark}

\section{Numerics \label{sec:numerics}}

\begin{figure}[!h]
	\centering
	\captionsetup[subfigure]{justification=centering}
	\begin{subfigure}[t]{0.3\columnwidth}
		\centering
		\includegraphics[width=\textwidth]{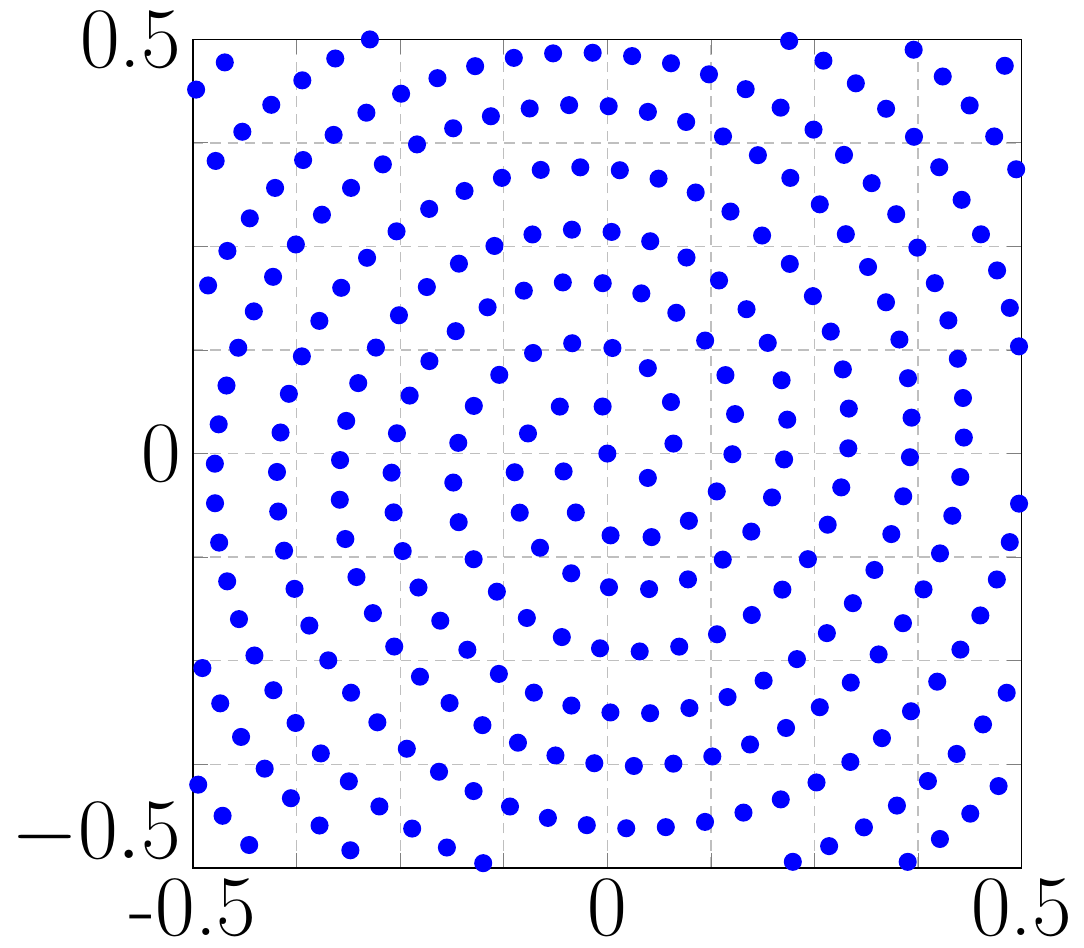}
		\caption{Spiral grid}
		\label{fig:polar_grids_spiral}
	\end{subfigure}
	\begin{subfigure}[t]{0.3\columnwidth}
		\centering
		\includegraphics[width=\textwidth]{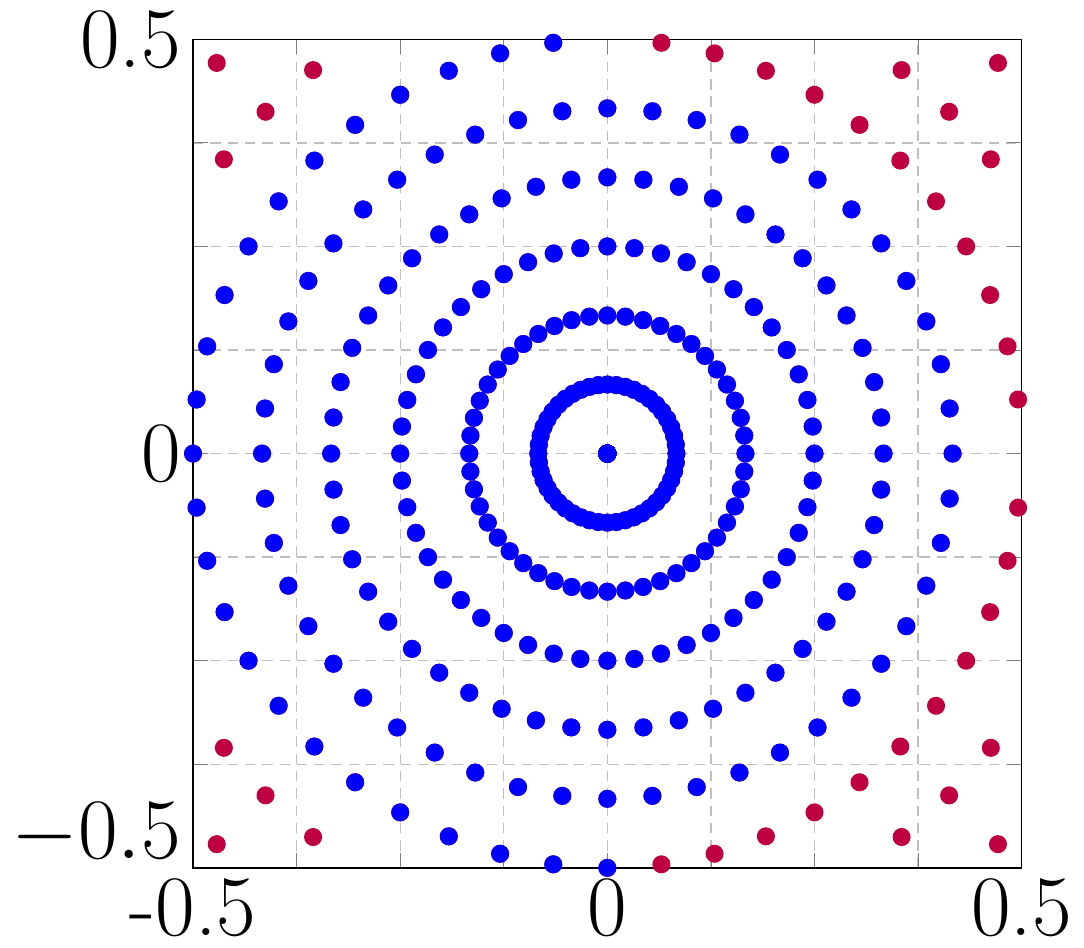}
		\caption{Polar (blue) and modified polar (red) grid}
		\label{fig:polar_grids_mpolar}
	\end{subfigure}
	\begin{subfigure}[t]{0.3\columnwidth}
		\centering
		\includegraphics[width=\textwidth]{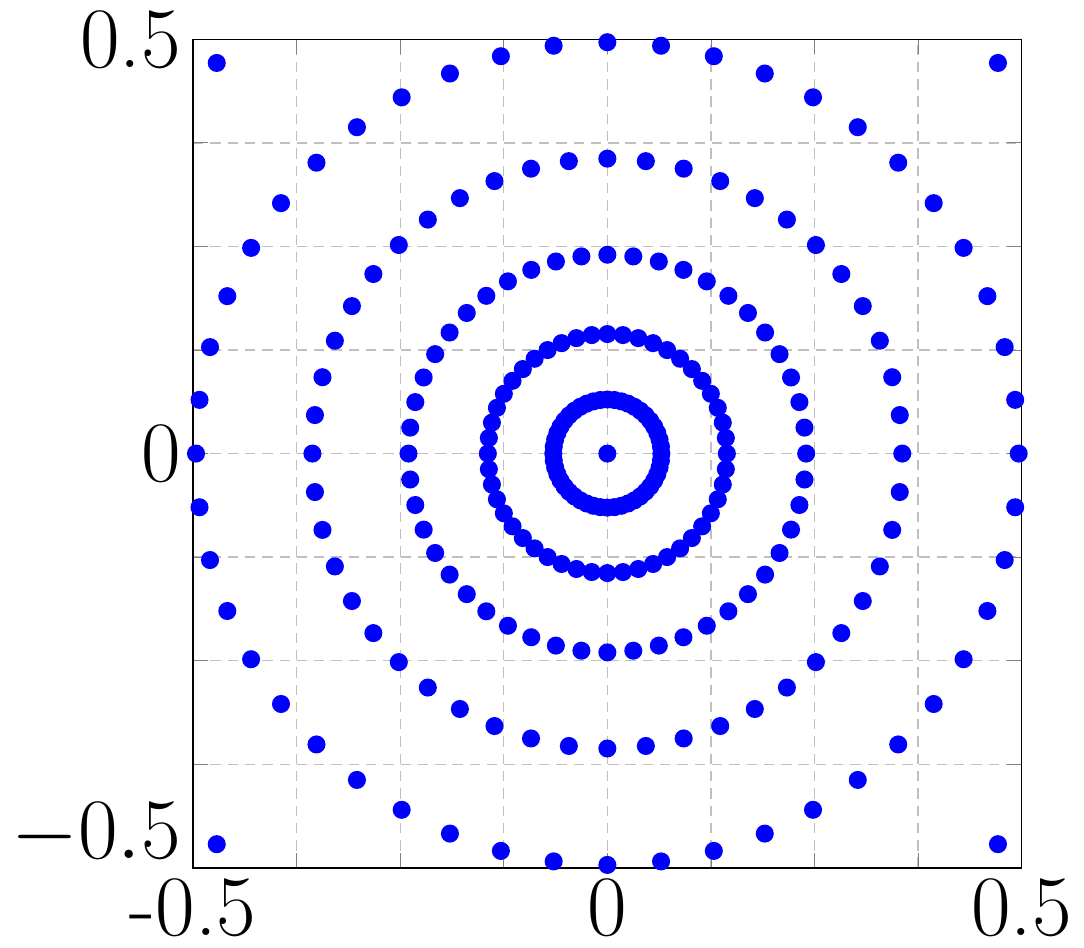}
		\caption{Logarithmic modified polar grid}
		\label{fig:polar_grids_log_mpolar}
	\end{subfigure}
	\caption{
		Exemplary grids of size $R=12$ and $T=2R$.
		\label{fig:polar_grids}}
\end{figure}

\begin{example}
	\label{ex:reconstr_phantom}
	Analogous to \cite[Ex.~5.4]{KiPo23} we firstly consider the reconstruction of trigonometric polynomials using the example of the Shepp-Logan phantom, see Fig.~\ref{fig:reconstr_phantom_spiral_original}, i.\,e., 
	the phantom data shall be treated as Fourier coefficients \mbox{$\b{\hat f}\coloneqq(\hat f_{\b k})_{\b k \in \I_{\b M}}$} of a trigonometric polynomial \eqref{eq:trig_poly_2d}.
	For given \mbox{$\b x_j \in \left[-\frac 12,\frac 12\right)^2$}, \mbox{$j=1,\dots,N$}, we then compute the evaluations $f(\b x_j)$ of \eqref{eq:trig_poly_2d} by means of an NFFT
	and use the resulting vector as input for the reconstruction.
	
	Since for \mbox{$|\I_{\b{2M}}| \leq N$} the optimality of weights computed by \eqref{eq:normal_equations_second_kind_double} was shown in \cite{KiPo23},
	we now consider \mbox{$|\I_{\b{2M}}|>N$} and compare the density compensation factors mentioned in Section~\ref{sec:trig_poly}.
	More precisely, for the spiral grid, cf.~Fig.~\ref{fig:polar_grids_spiral}, of size \mbox{$R=M$}, \mbox{$T=2R$}, we study 
	\eqref{eq:normal_equations_second_kind_double}, \eqref{eq:normal_equations_first_kind_double} and \eqref{eq:wcf_system_matrix}.
	The resulting reconstruction of the phantom of size \mbox{$M=64$} is presented in Fig.~\ref{fig:reconstr_phantom_spiral}~(top) including a detailed view of the 52nd row (bottom).	
	It can easily be seen that since the exactness condition \mbox{$|\I_{\b{2M}}| \leq N$} (see~Theorem~\ref{Thm:exact_trig_poly}) is violated, 
	the weights computed by \eqref{eq:normal_equations_second_kind_double} do not yield an exact reconstruction, cf.~Fig.~\ref{fig:reconstr_phantom_spiral_exact}.
	Note that the results using the least squares approximation by \eqref{eq:normal_equations_first_kind_double} are just as bad, cf.~Fig.~\ref{fig:reconstr_phantom_spiral_ls}.
	Merely, for the weights computed by \eqref{eq:wcf_system_matrix} we see a slight improvement in Fig.~\ref{fig:reconstr_phantom_spiral_wcf},	 
	since there are no artifacts on the inside of the phantom and the differences to the original phantom are not visible anymore.
	We remark that the comparatively small choice of \mbox{$M=64$} is necessary for \eqref{eq:wcf_system_matrix} being computationally affordable.
	\begin{figure}[ht]
		\centering
		\captionsetup[subfigure]{justification=centering}
		\begin{subfigure}[t]{0.24\columnwidth}
			\centering 
			\includegraphics[width=\textwidth,trim={0 0.85cm 0 0},clip]{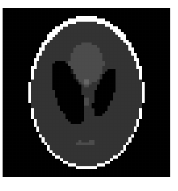}
		\end{subfigure}
		\begin{subfigure}[t]{0.24\columnwidth}
			\centering
			\includegraphics[width=\textwidth,trim={0 0.85cm 0 0},clip]{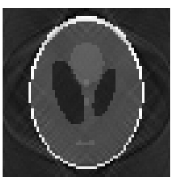}
		\end{subfigure}
		\begin{subfigure}[t]{0.24\columnwidth}
			\centering
			\includegraphics[width=\textwidth,trim={0 0.85cm 0 0},clip]{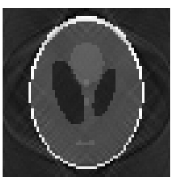}
		\end{subfigure}
		\begin{subfigure}[t]{0.24\columnwidth}
			\centering
			\includegraphics[width=\textwidth,trim={0 0.85cm 0 0},clip]{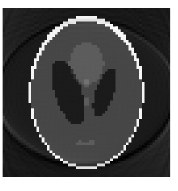}
		\end{subfigure}
		\begin{subfigure}[t]{0.24\columnwidth}
			\centering
			\includegraphics[width=\textwidth]{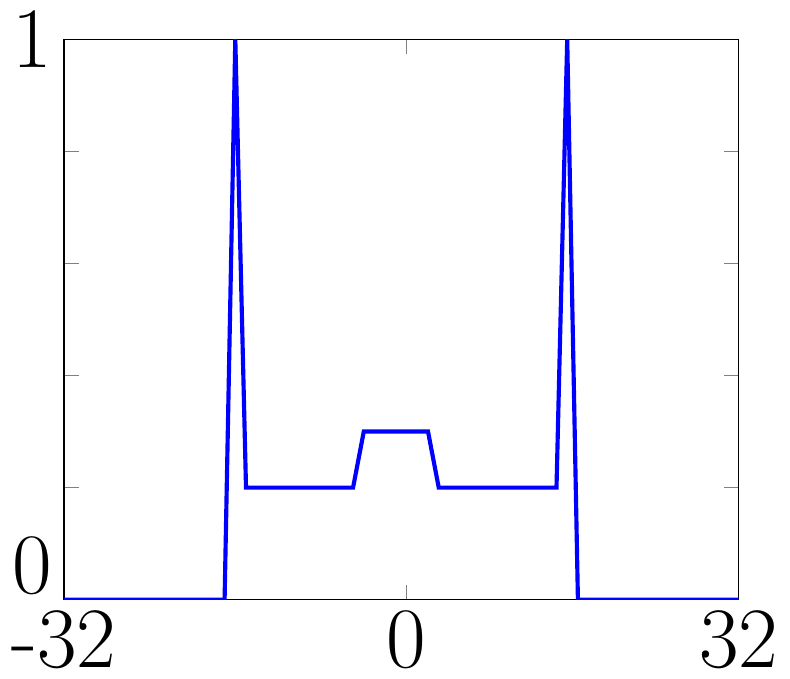}
			\caption{Phantom}
			\label{fig:reconstr_phantom_spiral_original}
		\end{subfigure}
		\begin{subfigure}[t]{0.24\columnwidth}
			\centering
			\includegraphics[width=\textwidth]{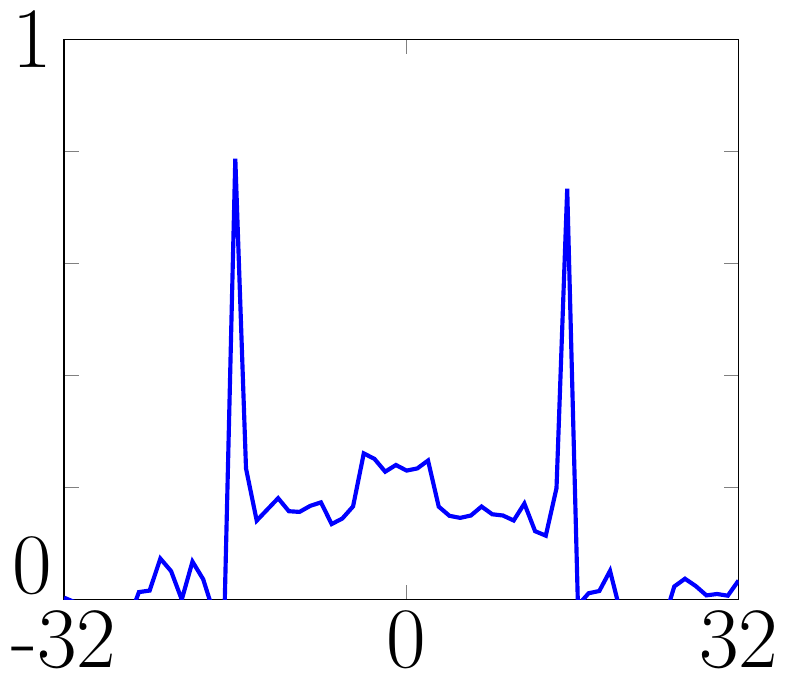}
			\caption{Use of \eqref{eq:normal_equations_second_kind_double}}
			\label{fig:reconstr_phantom_spiral_exact}
		\end{subfigure}
		\begin{subfigure}[t]{0.24\columnwidth}
			\centering
			\includegraphics[width=\textwidth]{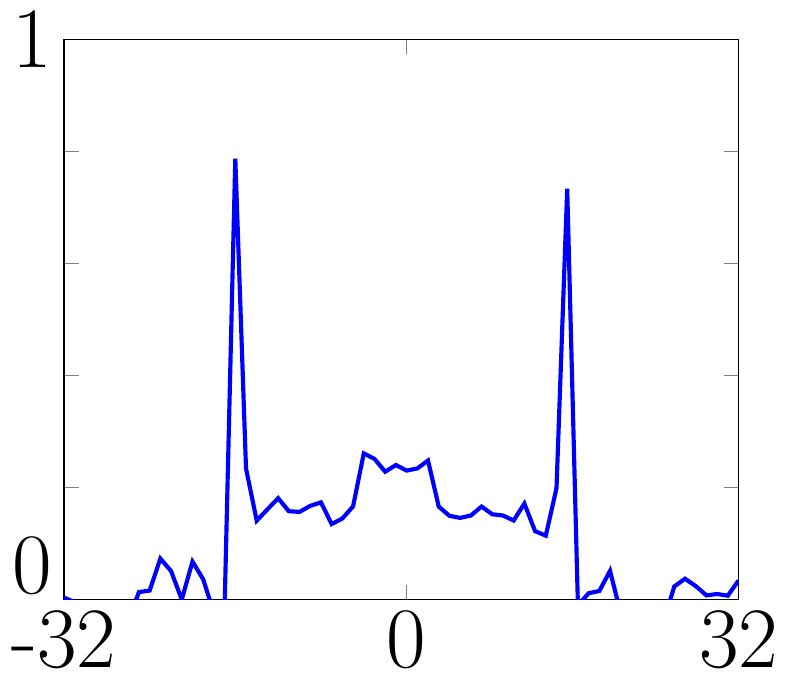}
			\caption{Use of \eqref{eq:normal_equations_first_kind_double}}
			\label{fig:reconstr_phantom_spiral_ls}
		\end{subfigure}
		\begin{subfigure}[t]{0.24\columnwidth}
			\centering
			\includegraphics[width=\textwidth]{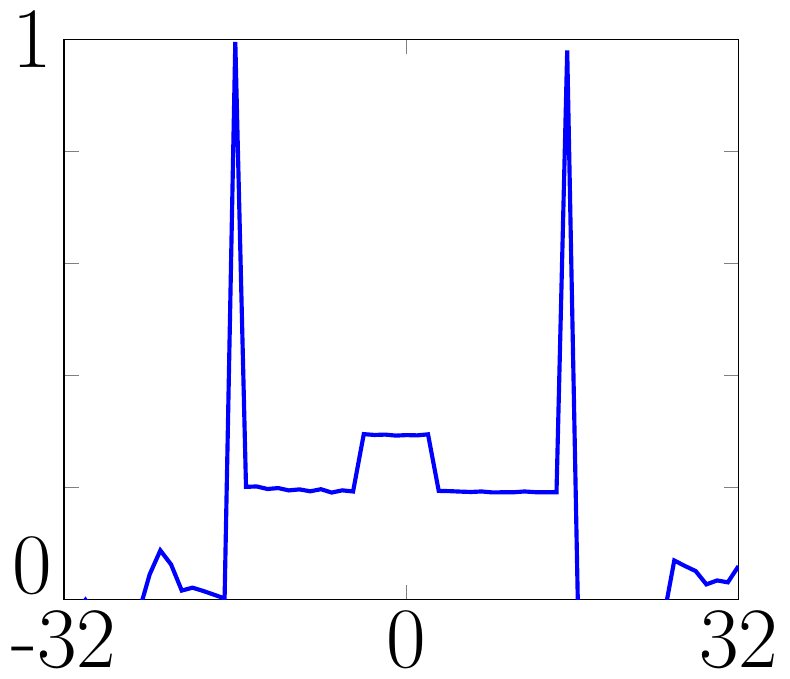}
			\caption{Use of\hspace{0.47ex}\eqref{eq:wcf_system_matrix}}
			\label{fig:reconstr_phantom_spiral_wcf}
		\end{subfigure}
		\caption{
			Reconstruction of the Shepp-Logan phantom of size \mbox{$M= 64$} (top) via density compensation factors computed by \eqref{eq:normal_equations_second_kind_double}, \eqref{eq:normal_equations_first_kind_double} and \eqref{eq:wcf_system_matrix}  for the spiral grid, cf.~Fig.~\ref{fig:polar_grids_spiral}, of size \mbox{$R=M$},\, \mbox{$T=2R$}; as well as a detailed view of the 52nd row (bottom).
			\label{fig:reconstr_phantom_spiral}}
	\end{figure}
	\ex
\end{example}

\begin{example}
	\label{ex:reconstr_bandlim}
	Summing up, we examine the reconstruction properties for bandlimited functions \mbox{$f\in L_1(\R^d)\cap C_0(\R^d)$} with maximum bandwidth~$\b M$.
	To determine the errors properly, we firstly specify a compactly supported function $\hat f$ and consequently compute its inverse Fourier transform \eqref{eq:forward_integral}, such that its samples $f(\b x_j)$ for given \mbox{$\b x_j \in \left[-\frac 12,\frac 12\right)^2$}, \mbox{$j=1,\dots,N$}, can be used for the reconstruction of the samples \mbox{$\hat f(\b k)$}, \mbox{$\b k\in\I_{\b{M}}$}.
	As in \cite[Ex.~5.5]{KiPo23} we consider the tensorized function \mbox{$\hat f(\b v) = g(v_1)\cdot g(v_2)$},
	where $g(v)$ is the one-dimensional triangular pulse 
	\mbox{$g(v) \coloneqq ( 1-\left|\tfrac{v}{b}\right| ) \cdot \chi_{[-b, b]}(v)$}.
	Then for all \mbox{$b\in\N$} with \mbox{$b\leq \frac M2$} 
	the associated inverse Fourier transform 
	\begin{align}
	\label{eq:test_func_bandlim}
	f(\b x)
	=
	\int_{\R^2} \hat f(\b v) \,\mathrm e^{2\pi\mathrm i \b v\b x} \,\mathrm{d}\b v 
	=
	b^2 \,\mathrm{sinc}^2(b\pi \b x) ,
	\ \;\b x\in\R^2 ,
	\end{align}
	is bandlimited with bandwidth $\b M$.
	For this test function, we compare the density compensation methods from Section~\ref{sec:bandlim}, i.\,e., 
	we study the computaion schemes
	\eqref{eq:normal_equations_second_kind_double}, \eqref{eq:wcf_system_matrix} and \eqref{eq:sol_greengard}.
	
	As a first experiment we fix \mbox{$M=32$} and \mbox{$b=12$}
	and consider the case \mbox{$|\I_{\b{2M}}| \leq N$}, which yields optimality for trigonometric polynomials.
	In addition to the real-world sampling of \eqref{eq:test_func_bandlim} also examine the artificial sampling data
	\begin{align}
	\label{eq:periodization_f_poly}
	\tilde f(\b x_j) 
	=
	\sum_{\b k\in\I_{\b{M}}} \hat f({\b k}) \,\e^{2\pi\i\b k\b x_j} 
	\end{align}
	of the periodization \eqref{eq:periodization_f}.
	A visualization of the chosen test function $f$ and the difference $f-\tilde f$ can be found in Fig.~\ref{fig:artificial_sampling_data}.
	\begin{figure}[!h]
		\centering
		\captionsetup[subfigure]{justification=centering}
		\begin{subfigure}[t]{0.45\columnwidth}
			\centering
			\includegraphics[width=\textwidth]{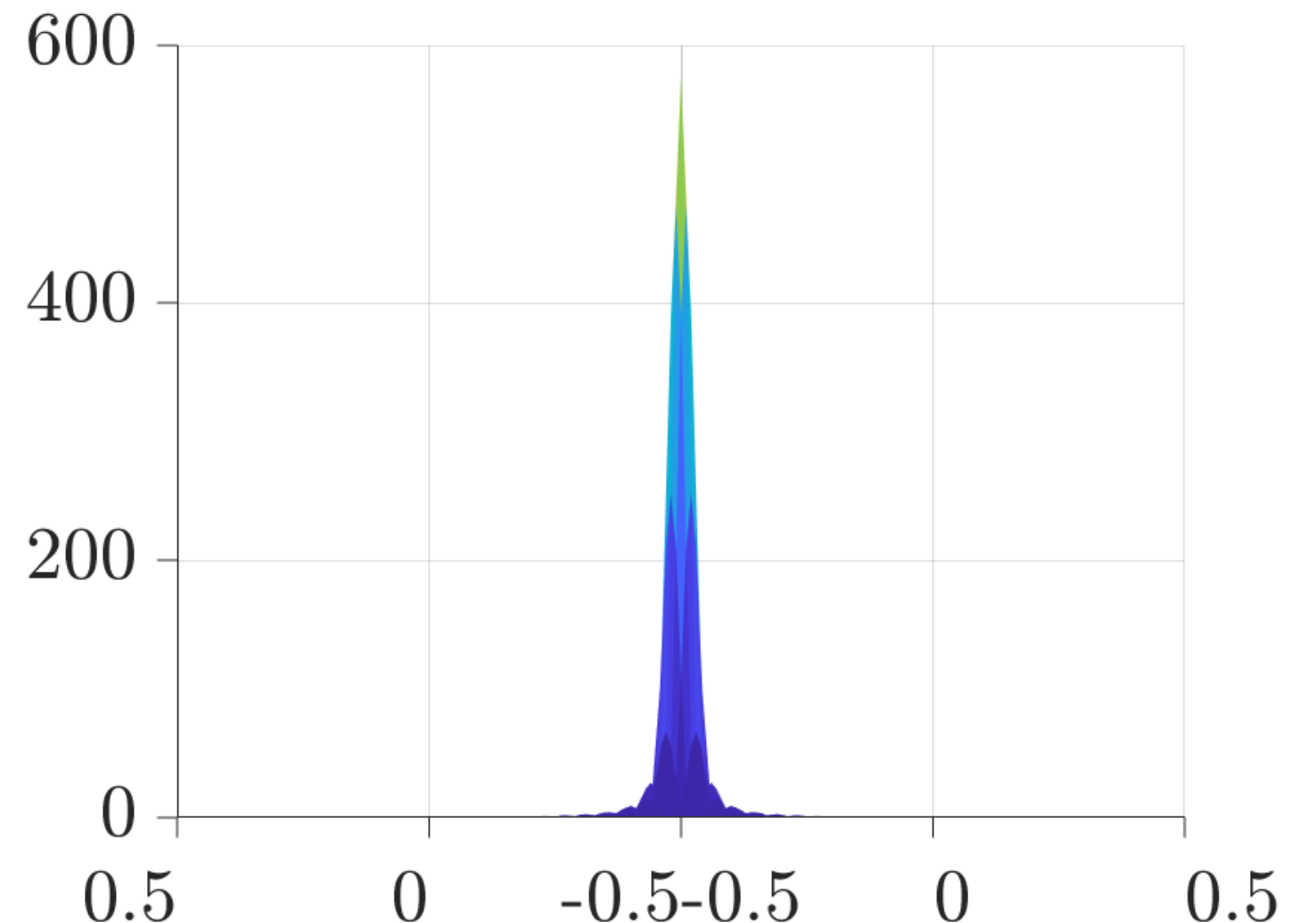}
			\caption{Test function $f$}
			\label{fig:artificial_sampling_data_original}
		\end{subfigure}
		\begin{subfigure}[t]{0.45\columnwidth}
			\centering
			\includegraphics[width=\textwidth]{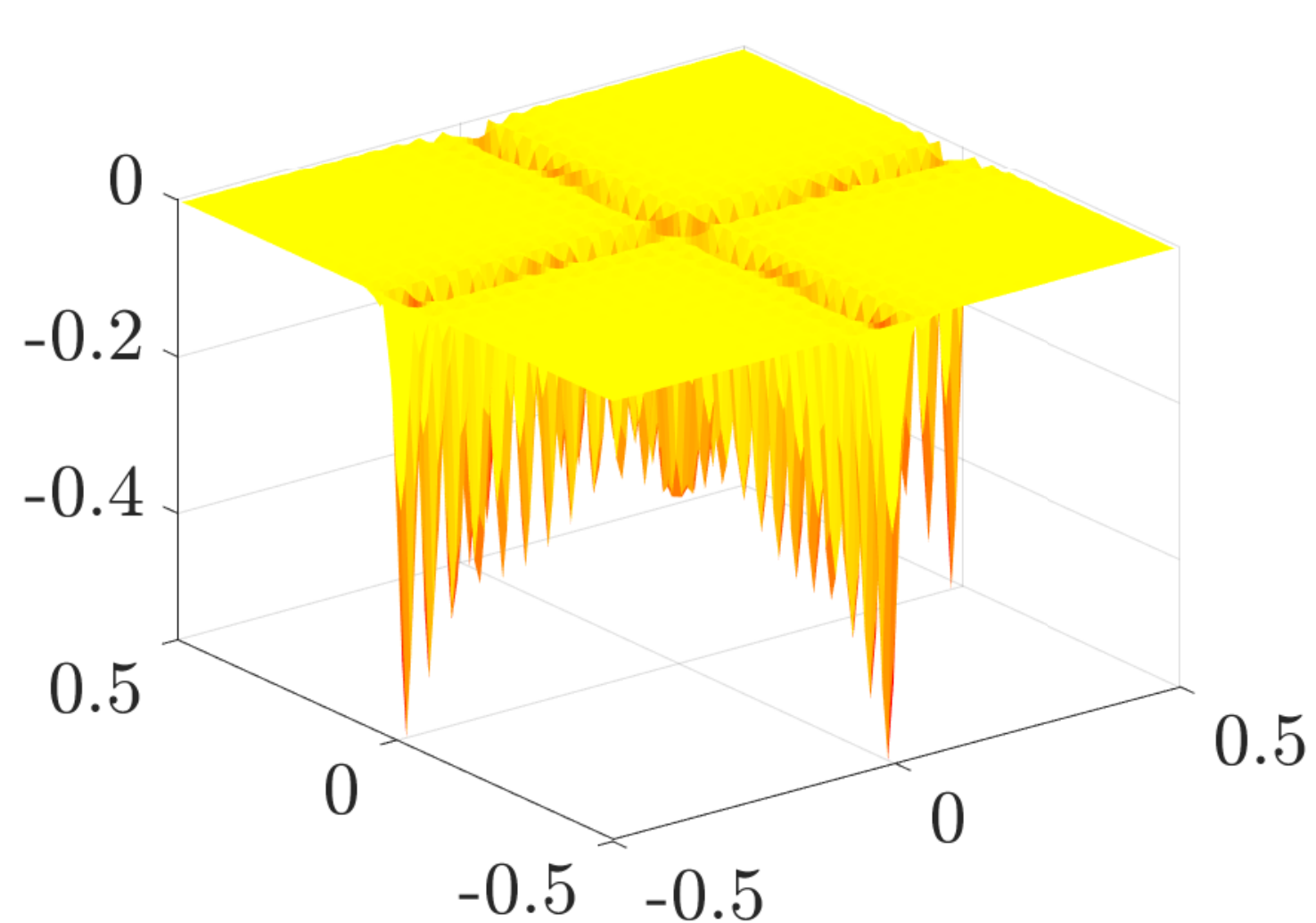}
			\caption{$f-\tilde f$}
			\label{fig:artificial_sampling_data_error}
		\end{subfigure}
		\caption{
			The test function \eqref{eq:test_func_bandlim} and its periodization \eqref{eq:periodization_f_poly}.
			\label{fig:artificial_sampling_data}}
	\end{figure}

	For the modified polar grid, cf.~Fig.~\ref{fig:polar_grids_mpolar}, of size \mbox{$R=2M$}, \mbox{$T=2R$}, we use these two kinds of sampling data to compute the
	reconstructions \mbox{$\b{\tilde h} \coloneqq ({\tilde h}^{\mathrm{w}}_{\b k})_{\b k\in\I_{\b{M}}}$}, cf.~\eqref{eq:Fourier_coeffs_nonequi}, and the pointwise  errors \mbox{$\big|\b{\tilde h} - \b{\hat f}\big|$}.
	The corresponding results are displayed in Fig.~\ref{fig:reconstr_bandlim_mpolar}.
	It can easily be seen that for the artificial sampling data \eqref{eq:periodization_f_poly} the weights computed by~\eqref{eq:normal_equations_second_kind_double} indeed yield an exact reconstruction, see Fig.~\ref{fig:reconstr_bandlim_mpolar_exact_tilde} (bottom), and thus are optimal.
	However, in the more realistic setting the results are not as good, but the weights by \eqref{eq:normal_equations_second_kind_double} and~\eqref{eq:wcf_system_matrix}, see Fig.~\ref{fig:reconstr_bandlim_mpolar_exact_tilde} and \ref{fig:reconstr_bandlim_mpolar_wcf_tilde} (top),
	produce nearly the same error as a reconstruction on an equispaced grid.
	Hence, for bandlimited functions the truncation \mbox{$\tilde f(\b x_j) \approx f(\b x_j)$} is the dominating error term and therefore reconstruction errors smaller than the ones shown in Fig.~\ref{fig:reconstr_bandlim_mpolar} cannot be expected.
	In addition, we remark that \eqref{eq:sol_greengard} is not as good, for none of the sampling data, see Fig.~\ref{fig:reconstr_bandlim_mpolar_green_tilde}.
	Note that we chose comparatively small \mbox{$M=32$} in order that the computation schemes \eqref{eq:wcf_system_matrix} and \eqref{eq:sol_greengard} are affordable.
	\begin{figure}[ht]
		\centering
		\captionsetup[subfigure]{justification=centering}
		\begin{subfigure}[t]{0.32\columnwidth}
			\centering 
			\includegraphics[width=\textwidth,trim={4cm 8cm 2.7cm 0},clip]{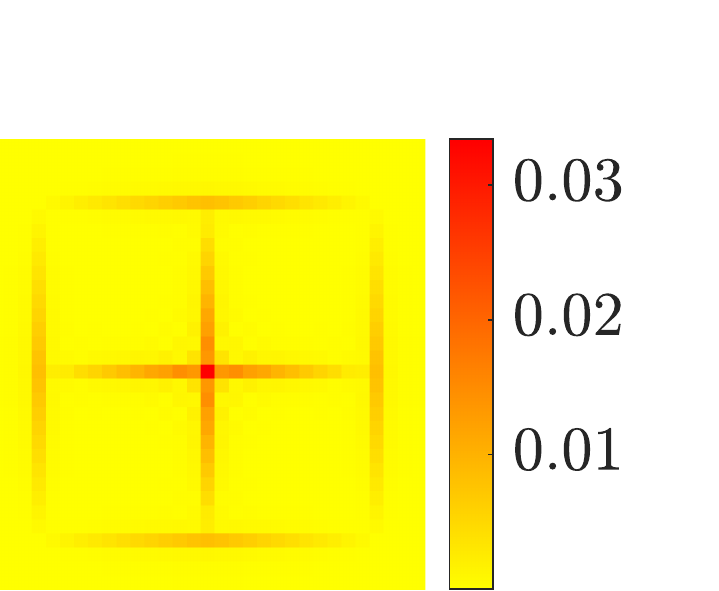}
		\end{subfigure}
		\begin{subfigure}[t]{0.32\columnwidth}
			\centering 
			\includegraphics[width=\textwidth,trim={4cm 8cm 2.7cm 0},clip]{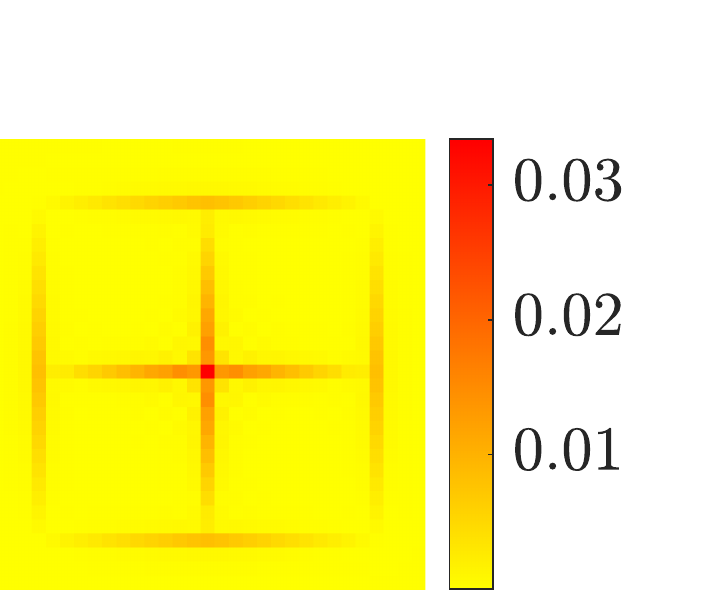}
		\end{subfigure}
		\begin{subfigure}[t]{0.32\columnwidth}
			\centering 
			\includegraphics[width=\textwidth,trim={4cm 8cm 2.7cm 0},clip]{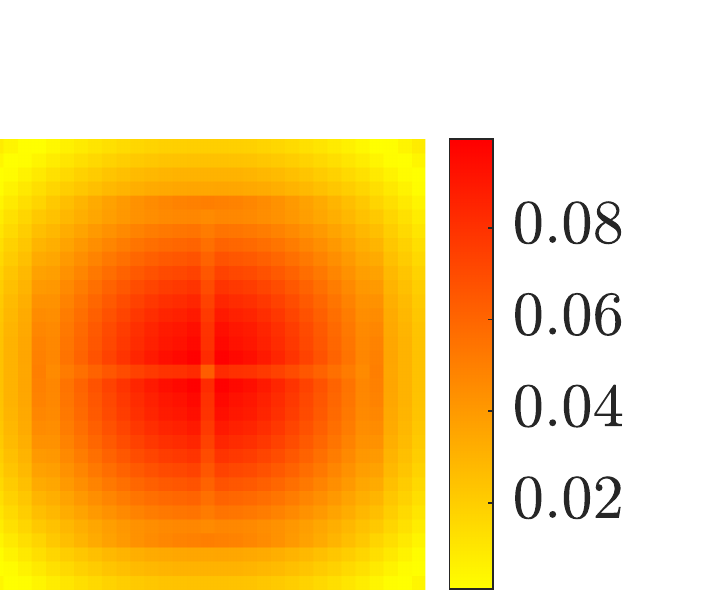}
		\end{subfigure}
		\begin{subfigure}[t]{0.32\columnwidth}
			\centering
			\includegraphics[width=\textwidth,trim={4cm 8cm 2.7cm 0},clip]{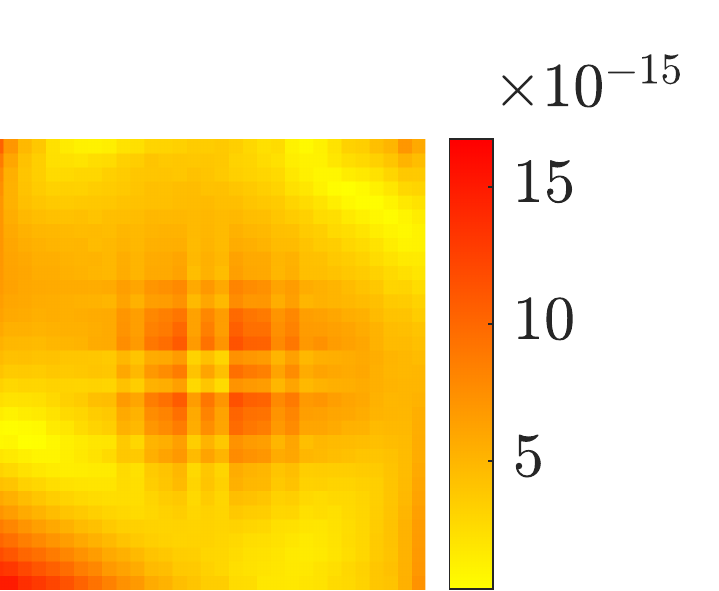}
			\caption{Use of \eqref{eq:normal_equations_second_kind_double}}
			\label{fig:reconstr_bandlim_mpolar_exact_tilde}
		\end{subfigure}
		\begin{subfigure}[t]{0.32\columnwidth}
			\centering
			\includegraphics[width=\textwidth,trim={4cm 8cm 2.7cm 0},clip]{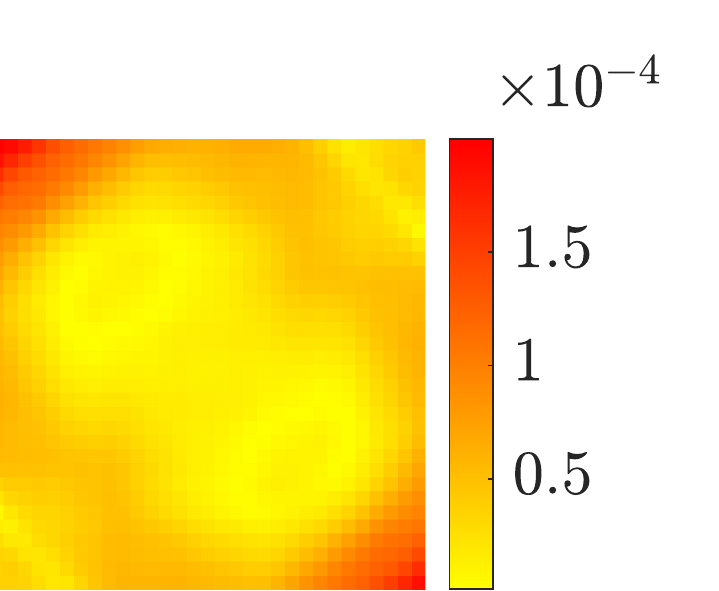}
			\caption{Use of \eqref{eq:wcf_system_matrix}}
			\label{fig:reconstr_bandlim_mpolar_wcf_tilde}
		\end{subfigure}
		\begin{subfigure}[t]{0.32\columnwidth}
			\centering
			\includegraphics[width=\textwidth,trim={4cm 8cm 2.7cm 0},clip]{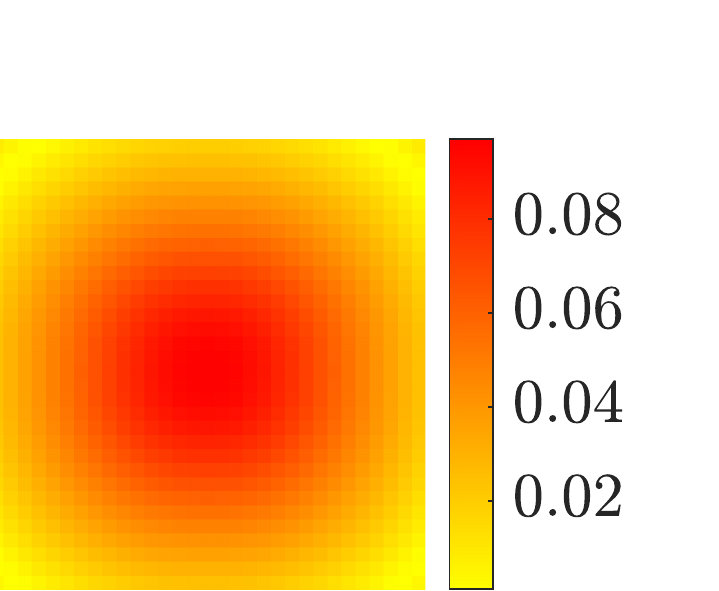}
			\caption{Use of \eqref{eq:sol_greengard}}
			\label{fig:reconstr_bandlim_mpolar_green_tilde}
		\end{subfigure}
		\caption{
			Pointwise error of the reconstruction of the tensorized triangular pulse with \mbox{$M=32$} and \mbox{$b=12$}, via density compensation factors computed by \eqref{eq:normal_equations_second_kind_double}, \eqref{eq:wcf_system_matrix} and \eqref{eq:sol_greengard} for the modified polar grid, cf.~Fig.~\ref{fig:polar_grids_mpolar}, of size \mbox{$R=2M$}, \mbox{$T=2R$}, using samples $f(\b x_j)$ (top) and artificial samples $\tilde f(\b x_j)$ (bottom).
			\label{fig:reconstr_bandlim_mpolar}}
	\end{figure}

	In a second experiment we sample the function \eqref{eq:test_func_bandlim} with \mbox{$M=64$} and \mbox{$b=24$} at
	logarithmic modified polar grids, cf.~Fig.~\ref{fig:polar_grids_log_mpolar}, of different sizes \mbox{$R$}, \mbox{$T=2R$}.
	The corresponding relative errors 
	\begin{align}
	\label{eq:relative_errors}
	\frac{\|\b{\tilde h}-\b{\hat f}\|_{2}}{\|\b{\hat f}\|_{2}}
	\end{align}
	can be found in Table~\ref{table:reconstr_bandlim_logmpolar}.
	Note that for \mbox{$M=64$} we have \mbox{$|\I_{\b M}|=4096$} and \mbox{$|\I_{\b {2M}}|=16384$}.
	We observe that for \mbox{$|\I_{\b {2M}}| \leq N$} the errors from \eqref{eq:normal_equations_second_kind_double} and \eqref{eq:wcf_system_matrix} are comparably good, while except for \mbox{$|\I_{\b {M}}| > N$} the scheme \eqref{eq:wcf_system_matrix} produces optimal results.
	\begin{table}[ht]
		\centering
		\begin{tabular}{|c|c|c|c|c|}
			\hline
			R & N & Use of \eqref{eq:normal_equations_second_kind_double}  & Use of \eqref{eq:wcf_system_matrix} & Use of \eqref{eq:sol_greengard} \\
			\hline\hline
			40 & 3565 & 4.4908e-01 & 1.7608e-01 & 2.0475e-01 \\ 
			\hline
			48 & 5145 & 1.0886e-01 & 2.0690e-02 & 1.5829e-01 \\ 
			56 & 7149 & 3.6632e-02 & 8.0215e-03 & 1.5401e-01 \\ 
			\hline
			64 & 9429 & 2.5109e-02 & 4.7988e-03 & 1.8337e-01 \\ 
			72 & 11965 & 7.6871e-03 & 4.1096e-03 & 2.0633e-01 \\ 
			80 & 14909 & 5.5991e-03 & 3.8507e-03 & 2.1932e-01 \\ 
			\hline
			88 & 18153 & 3.8889e-03 & 3.9853e-03 & 2.2665e-01 \\ 
			96 & 21589 & 4.2240e-03 & 3.7917e-03 & 2.3092e-01 \\ 
			\hline
		\end{tabular}
		\caption{
			Relative errors~\eqref{eq:relative_errors} of the reconstruction of the tensorized triangular pulse with \mbox{$M=64$} and \mbox{$b=24$}, via density compensation factors computed by \eqref{eq:normal_equations_second_kind_double}, \eqref{eq:wcf_system_matrix} and \eqref{eq:sol_greengard} for log. modified polar grids, cf.~Fig.~\ref{fig:polar_grids_log_mpolar}, of different sizes.
			\label{table:reconstr_bandlim_logmpolar}}
	\end{table}
	\ex
\end{example}

\section*{Acknowledgments}
Melanie Kircheis acknowledges the support from the BMBF grant 01$\mid$S20053A (project SA$\ell$E).
Daniel Potts acknowledges the funding by Deutsche Forschungsgemeinschaft (German Research Foundation) -- Project--ID 416228727 -- SFB 1410.

\bibliographystyle{abbrv}

\end{document}